\newcommand{\dist}{\operatorname{dist}}
\newcommand{\diam}{\operatorname{diam}}
\newcommand{\ran}{\operatorname{Ran}}
\newcommand{\ceil}[1]{\lceil #1 \rceil}
\newcommand{\NN}{\mathbb{N}}
\newcommand{\RR}{\mathbb{R}}
\newcommand{\ZZ}{\mathbb{Z}}
\newtheorem{thm}{Theorem}
\theoremstyle{remark}
\newtheorem{rem}[thm]{Remark}
\newcommand{\hm}[1]{\leavevmode{\marginpar{\tiny%
 $ \hbox to 0mm{\hspace*{-0.5mm} $ \leftarrow $ \hss}%
 \vcenter{\vrule depth 0.1mm height 0.1mm width \the\marginparwidth}%
 \hbox to
 0mm{\hss $ \rightarrow $ \hspace*{-0.5mm}} $ \\\relax\raggedright #1}}}
\begin{document}
\title[Sampling inequality for $L^2$-norms of eigenfunctions]{Sampling inequality for $L^2$-norms of eigenfunctions, spectral projectors, and Weyl sequences of  Schr\"odinger operators}
\author[M.~Tautenhahn]{Martin Tautenhahn}
\address{Technische Universit\"at Chemnitz,
Fakult\"at f\"ur Mathematik, Germany}
 \email{martin.tautenhahn@mathematik.tu-chemnitz.de}
\author[I.~Veseli\'c]{Ivan Veseli\'c}

\begin{abstract}
 
We consider a Schr\"odinger operator with bounded, measurable potential in multidimensional Euclidean space.
We prove for every $L^2$-eigenfunction a quantitative equidistribution estimate.
It compares the total $L^2$-norm with the $L^2$-norm over an equidistributed collection of balls.
Our estimate is explicit with respect to the radius of the balls, norm of the potential and the
energy of the eigenfunction. Similar estimates also hold for Weyl sequences and for linear combinations of eigenfunctions,
as long as the associated eigenvalues are sufficiently close.

\end{abstract}

\keywords{unique continuation, uncertainty principle, Delone set, Schr\"odinger operators, observability estimate}

\maketitle

Let $M,\delta > 0$. We say that a sequence $Z = (z_k)_{k \in (M\ZZ)^d}\subset \RR^d$ 
is \emph{$(M,\delta)$-equidistributed}, if for all $k \in (M \ZZ)^d$ we have
$
  B(z_k , \delta) := \{x \in \RR^d \colon \lvert x-z_k \rvert_2 < \delta\} \subset \Lambda_M (k) := 
k + (-M/2 , M/2)^d .
$ 
For a $(M,\delta)$-equidistributed sequence $Z$ we set $S_{\delta, Z} = \cup_{k \in (M \ZZ)^d } B(z_k , \delta)$
and $W_{\delta, Z}:=\chi_{S_{\delta, Z}} $. Note that every $(M,\delta)$-equidistributed set $Z$ is relatively dense (as used in the context of Delone sets).
Conversely, for every relatively dense set $X$ and $\delta>0$ we can find a scale $M$  and subset $Z \subset X$ such that $Z$ is 
$(M,\delta)$-equidistributed.
\begin{figure}[ht]\centering
\begin{tikzpicture}
\pgfmathsetseed{{\number\pdfrandomseed}}
\foreach \x in {0.5,1.5,...,4.5}{
  \foreach \y in {0.5,1.5,...,4.5}{
    \filldraw[fill=gray!70] (\x+rand*0.35,\y+rand*0.35) circle (0.15cm);
  }
}
\foreach \y in {1,2,3,4}{
\draw (\y,0) --(\y,5);
\draw (0,\y) --(5,\y);
}

\begin{scope}[xshift=-6cm]
\foreach \x in {0.5,1.5,...,4.5}{
  \foreach \y in {0.5,1.5,...,4.5}{
    \filldraw[fill=gray!70] (\x,\y) circle (1.5mm);
  }
}
\foreach \y in {1,2,3,4}{
  \draw (\y,0) --(\y,5);
  \draw (0,\y) --(5,\y);
}
\end{scope}
\end{tikzpicture}
\caption{Illustration of $S_{\delta,Z} \subset \mathbb{R}^2$ for periodically (left) and non-periodically (right) arranged $Z$.\label{fig:equidistributed}}
\end{figure}
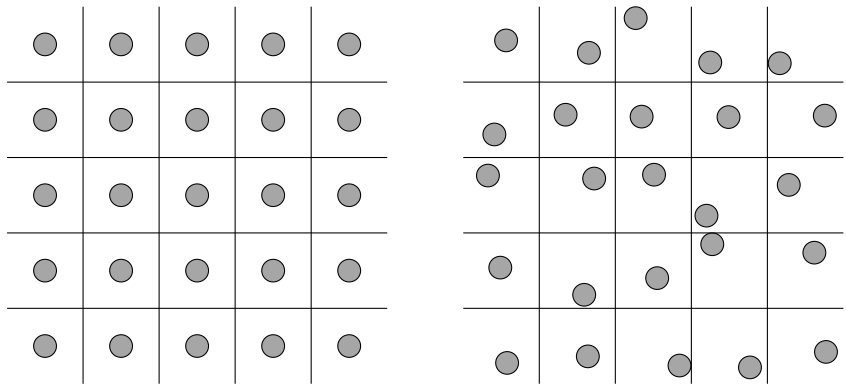

\begin{thm}
\label{t:scale_free-UCP}
There exists a constant $K_0\in(0,\infty)$ depending merely on the dimension $d$, such that for any $M >0$, $\delta \in (0,M/2]$, any $(M,\delta)$-equidistributed sequence 
$Z$,
any measurable and bounded $V\colon {\RR^d}\to \RR$, and any real-valued $\varphi\in W^{2,2}(\RR^d)$ satisfying
$\lvert \Delta \varphi \rvert \leq \lvert (V-E)\varphi \rvert $ a.\ e.\ on $\RR^d$ we have
\begin{equation*}
\label{eq:M-scale_free-UCP}
\left( \frac{\delta}{M}\right)^{K_0(1+M^{4/3} \lVert V-E \rVert_\infty^{2/3})}
\lVert \varphi \rVert 
\leq \lVert W_{\delta, Z}\varphi \rVert 
\leq \lVert \varphi \rVert .
\end{equation*}
\end{thm}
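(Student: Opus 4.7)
The upper bound $\lVert W_{\delta,Z}\varphi\rVert\leq\lVert\varphi\rVert$ is immediate from $W_{\delta,Z}\leq 1$, so the task is the lower bound. My first step is the dilation $\tilde\varphi(x)=\varphi(Mx)$, which reduces the problem to $M=1$ at the price of replacing $V-E$ by $M^{2}(V-E)$; this is what will eventually produce the factor $M^{4/3}=(M^{2})^{2/3}$ in the exponent. It therefore suffices to prove, uniformly in $k\in\ZZ^{d}$, a single-cube estimate of the form
\[
\lVert\varphi\rVert_{L^{2}(\Lambda_{1}(k))}\leq (\delta/M)^{-K_{0}(1+\lVert V-E\rVert_{\infty}^{2/3})}\,\lVert\varphi\rVert_{L^{2}(B(z_{k},\delta/M))},
\]
and then to square and sum the resulting bounds over $k\in(M\ZZ)^{d}$.

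The engine of the single-cube estimate is a Carleman inequality with a polynomial radial weight $w(x)\simeq\lvert x\rvert$, of Bourgain--Kenig type:
\[
\alpha^{3}\!\int w^{1-2\alpha}|u|^{2}\,dx\;\leq\; C\!\int w^{2-2\alpha}|\Delta u|^{2}\,dx,\qquad u\in C_{c}^{\infty}(B(0,R)\setminus\{0\}),
\]
valid for $\alpha$ above a purely dimensional threshold. Applied to $u=\eta\varphi$ with a suitable radial cut-off $\eta$ and combined with the hypothesis $\lvert\Delta\varphi\rvert\leq\lvert V-E\rvert\lvert\varphi\rvert$, the bulk term $\int w^{2-2\alpha}(V-E)^{2}|\varphi|^{2}$ on the right is absorbed into the left as soon as $\alpha^{3}\gtrsim\lVert V-E\rVert_{\infty}^{2}$. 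This forces the choice $\alpha\sim 1+\lVert V-E\rVert_{\infty}^{2/3}$, and, with $\nabla\eta$ supported in two thin annuli, produces a quantitative three-annulus (Hadamard) inequality
\[
\lVert\varphi\rVert_{L^{2}(A_{2})}\leq C\,\lVert\varphi\rVert_{L^{2}(A_{1})}^{\theta}\,\lVert\varphi\rVert_{L^{2}(A_{3})}^{1-\theta},
\]
with $\theta\in(0,1)$ and $C$ explicit in $\alpha$, hence in $\lVert V-E\rVert_{\infty}$.

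Passing from the small ball $B(z_{k},\delta/M)$ to the whole unit cube $\Lambda_{1}(k)$ is then carried out by chaining three-annulus inequalities along a finite sequence of overlapping balls of doubling radii, first concentric at $z_{k}$ and then translated through $\Lambda_{1}(k)$. Because $\delta\leq M/2$ keeps $B(z_{k},\delta)$ at a macroscopic distance from $\partial\Lambda_{M}(k)$, the number of chaining steps is controlled by a constant depending only on $d$. Every step compounds the power of the small-ball norm by the factor $\theta^{-1}$, so the final exponent comes out as $K_{0}(1+\lVert V-E\rVert_{\infty}^{2/3})$; undoing the dilation restores the factor $M^{4/3}$, and summing the squared inequalities over $k$ produces the claimed global bound.

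The main obstacle is the delicate balance in the choice of the Carleman parameter $\alpha$: it must be large enough to absorb the potential contribution but small enough that the compounded exponent $\theta^{-1}$ (which itself depends on $\alpha$ through the Carleman weight) stays of order $1+\lVert V-E\rVert_{\infty}^{2/3}$. Any slack in this balance, or in the treatment of the commutator and boundary terms generated by $\eta$, immediately introduces spurious logarithmic or power factors that would destroy the scale-freeness of the final exponent.
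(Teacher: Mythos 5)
Your outline of the analytic engine (Bourgain--Kenig Carleman estimate with parameter $\alpha\sim 1+\lVert V-E\rVert_\infty^{2/3}$, three-annulus inequality, chaining of doubling balls, rescaling to produce $M^{4/3}$) matches the machinery behind Corollary~3.2 of \cite{RojasMolinaV-13}, which the paper invokes. However, there is a genuine gap in your reduction: the single-cube estimate
$\lVert\varphi\rVert_{L^2(\Lambda_1(k))}\leq (\delta/M)^{-K_0(\cdot)}\lVert\varphi\rVert_{L^2(B(z_k,\delta/M))}$
does \emph{not} hold uniformly in $k$ with a constant independent of $\varphi$, and it cannot be produced by the chaining argument you describe. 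Each three-annulus step is multiplicative, $\lVert\varphi\rVert_{A_2}\leq C\lVert\varphi\rVert_{A_1}^{\theta}\lVert\varphi\rVert_{A_3}^{1-\theta}$, so after chaining you obtain an interpolation inequality of the form $\lVert\varphi\rVert_{\Lambda_1(k)}\leq C\lVert\varphi\rVert_{B(z_k,\delta)}^{a}\lVert\varphi\rVert_{\Lambda_T(k)}^{1-a}$ with $a\in(0,1)$. To convert this into the linear bound you assert, you must control the ratio $\lVert\varphi\rVert_{\Lambda_T(k)}/\lVert\varphi\rVert_{\Lambda_1(k)}$ from above; otherwise the constant degenerates as this ratio grows. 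Such an a priori doubling bound is false at a general site: a purely local argument of the kind you propose would apply equally to local solutions, and high-degree harmonic polynomials (which satisfy $\lvert\Delta\varphi\rvert\leq\lvert(V-E)\varphi\rvert$ with $V=E=0$) have unbounded doubling index, so no $\varphi$-independent constant exists.

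The paper closes exactly this gap with a dominant/weak dichotomy, which is the one idea your proposal is missing. A site $k\in\ZZ^d$ is called dominant if $\int_{\Lambda_1(k)}\lvert\varphi\rvert^2\geq \frac{1}{2T^d}\int_{\Lambda_T(k)}\lvert\varphi\rvert^2$ with $T=62\ceil{\sqrt{d}}$; this is precisely the a priori doubling bound needed to make the unique continuation constant uniform, and it is applied only at dominant sites (with the sampling ball taken around the near-neighbor $z_{k^+}$, located in a controlled annulus around $\Lambda_1(k)$). The weak sites are then disposed of by a counting argument: their total contribution is less than $\frac12\lVert\varphi\rVert^2$, so $\lVert\varphi\rVert^2<2\lVert\chi_D\varphi\rVert^2$ where $D$ is the union of dominant boxes, and summing the single-box estimates over dominant $k$ only still recovers the full norm. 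Without this step (or some substitute supplying a uniform doubling bound), the summation over all $k\in(M\ZZ)^d$ in your last paragraph does not go through.
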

This is a corollary of the proof of the main result of \cite{RojasMolinaV-13}, where Schr\"odinger operators on a sequence of boxes 
$\Lambda_L(0)$, $L \in \NN$, are considered. J.~M.~Barbaroux asked the question whether \cite{RojasMolinaV-13} holds for operator on the whole space as well.
Let us sketch which modifications are necessary in the proof of \cite[Theorem~2.1]{RojasMolinaV-13}
to obtain the above result.
\begin{proof}
(1) By scaling it suffices to consider $M=1$.
(2) One does not need to extend $\varphi$ further, since it is already defined on the whole of $\RR^d$.
(3) A site $k \in \ZZ^d$ is called dominant if 
\begin{equation} \label{eq:dominant}
 \int_{\Lambda_1 (k)} \lvert \varphi \rvert^2 \geq \frac{1}{2T^d} \int_{\Lambda_T (k)} \lvert \varphi \rvert^2
\end{equation}
with $T= 62\ceil{\sqrt{d}}$, and otherwise weak. This corresponds to the notion chosen in \cite{RojasMolinaV-13}
in the case of periodic boundary conditions.
(4) Estimating the contribution of boxes centered at weak sites --- now the sum is over the infinite set $ \ZZ^d$, but all sums are finite since $\varphi \in L^2(\RR^d)$ --- 
gives again 
$\lVert \varphi\rVert^2 < 2 \lVert \chi_D \varphi\rVert^2$, where $D$ denotes the union of those boxes $ \Lambda_1(k)$ such that $k \in \ZZ^d$ is dominant.
(5) 
 For a dominating site $k \in \ZZ^d$ we define the right near-neighbor by $k^+ := k + (\lceil \sqrt{d} \rceil + 1) \mathbf{e_1}$ 
and  set $R:=R_k:=\lceil \sqrt{d} \rceil+y_k$ with 
$y_k := \langle e_1 , z_{k^+} \rangle - \langle e_1 , k^+ \rangle + 1/2 \in [0,1]$. Then 
$\Theta :=\Lambda_1(k)$ is disjoint from the open ball $B(z_{k^+},R)$. On the other hand there exists
an $a \in \Lambda_1(k)$ with $\lvert a-z_{k^+} \rvert_2=R$. Thus for any $b \in \Lambda_1(k)$ we have
$\lvert b-z_{k^+}\rvert_2\leq \lvert b-a\rvert_2 + \lvert a-z_{k^+}\rvert_2\leq \sqrt{d}+ R\leq 2R$. Thus 
$\Theta \subset \overline{B (z_{k^+},2R)} \setminus B (z_{k^+},R)$.
(6) Once this geometric condition is satisfied, the proof of Corollary 3.2 in 
\cite{RojasMolinaV-13} applies.
(7)
Hence  for every dominating site $k\in \ZZ^d$ we have
\[
\lVert \chi_{B(z_{k^+},\delta)} \varphi \rVert^2 \geq C_{
\mathrm{qUC}} \lVert \chi_{\Lambda_1(k)} \varphi \rVert^2
\]
with the constant $C_{
\mathrm{qUC}}$ arising from the quantitative unique continuation estimate, i.e.\ Corollary~3.2 in \cite{RojasMolinaV-13}. 
(8)
Taking the sum over all dominating sites $k \in \ZZ^d$ we obtain
\begin{equation*}
\label{eq:sum}
\sum_{k \in \ZZ^d \text{ dominating}} \lVert \chi_{B(z_{k^+},\delta)} \varphi \rVert^2
\geq \frac{C_{
\mathrm{qUC}}}{2}\lVert \varphi \rVert^2 .
\end{equation*}
 The result follows by using the quantitative estimate of $C_{\mathrm{qUC}}$ provided in \cite{RojasMolinaV-13}.
\end{proof}

\begin{rem}
We  use the opportunity to correct a minor mistake in 
\cite{RojasMolinaV-13}. There, in the statement of Theorem 3.1 and Corollary 3.2 the geometric condition
$\diam \Theta \leq  R =\dist (x,\Theta)$ should be replaced by
$\Theta \subset \overline{B (x,2R)} \setminus B (x,R)$. The latter is the property actually used in the proof. This property is satisfied in the application, i.e.\ the proof of \cite[Theorem~2.1]{RojasMolinaV-13}, as explained in Step (5) above.
\end{rem}
\begin{rem}
While for Schr\"odinger operators on a box eigenfunctions capture the whole spectrum, for analogs on $\RR^d$
continuous spectrum may exist as well. Thus attention cannot be restricted to eigenfunctions only, as pointed out in a discussion by G.~Raikov.
Hence studying unique continuation of spectral projectors 
(rather than of eigenfunctions) is here even more important than for operators on cubes, 
as done in \cite{Klein-13} and \cite{NakicTTV-15}. Nevertheless there are important classes of potentials 
(short range or random, for instance) where a substantial part of the spectrum consists of eigenvalues. 
C.~Rojas Molina pointed out that in view of the close relationship between proofs in \cite{RojasMolinaV-13} and 
\cite{Klein-13}, the latter should have an infinite volume analogue like Theorem \ref{t:scale_free-UCP} above as well.  
Indeed we have:
\end{rem}
\begin{thm} \label{t:scale_free-UCP2}
There exists a constant $K_1 \geq 1$ depending merely on the dimension $d$, 
such that for all $E_0,M > 0$, all $\delta \in (0,M/2)$, all $(M,\delta)$-equidistributed sequences $Z$, 
all measurable and bounded $V\colon {\RR^d}\to \RR$, and all intervals $I \subset (-\infty , E_0]$ with 
\[
 \vert I \rvert \leq 2 \gamma \quad \text{where} \quad
 \gamma^2 = \frac{1}{2M^4} \left(\frac{\delta}{M}\right)^{K_1 \bigl(1+ M^{4/3}(2\lVert V\rVert_\infty + E_0)^{2/3} \bigr)},
\]
we have
 \[
  \chi_I (H) W_{\delta, Z} \chi_I (H) \geq M^4\gamma^2 \chi_I (H)  \quad \text{where} \quad H:= -\Delta +V. 
 \]
 \end{thm}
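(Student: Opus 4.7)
My plan is to reduce the operator inequality to a pointwise sampling inequality on $\ran\chi_I(H)$ and to obtain the latter by the same scheme that gave Theorem~\ref{t:scale_free-UCP}, upgraded to an inhomogeneous setting. Because $W_{\delta,Z}=W_{\delta,Z}^2$ is an orthogonal projection, the claim $\chi_I(H)W_{\delta,Z}\chi_I(H)\geq M^4\gamma^2\chi_I(H)$ is equivalent to
\[
\lVert W_{\delta,Z}\varphi\rVert^2 \geq M^4\gamma^2 \lVert \varphi \rVert^2 \qquad \text{for every } \varphi\in\ran\chi_I(H).
\]
As in step~(1) of the preceding proof I would reduce to $M=1$ by rescaling; the factors $M^4$ in front of $\gamma^2$ and $M^{4/3}$ in the exponent reappear under $x\mapsto x/M$ because $-\Delta$ transforms by $M^{-2}$ and the bulk term $\lVert\psi\rVert^2$ below acquires a factor $M^4$ from the two derivatives.

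\textbf{Inhomogeneous equation.} Fix $\varphi\in\ran\chi_I(H)$ and let $E$ denote the midpoint of $I$, so that $\lvert H-E\rvert\le\gamma$ on $\ran\chi_I(H)$. Put $\psi:=(H-E)\varphi=(-\Delta+V-E)\varphi$; the spectral theorem yields $\lVert\psi\rVert\le\gamma\lVert\varphi\rVert$. Decomposing $\varphi=\varphi_1+i\varphi_2$ and $\psi=\psi_1+i\psi_2$ into real and imaginary parts, each $\varphi_j$ lies in $W^{2,2}(\RR^d)$ and satisfies the \emph{inhomogeneous} differential inequality
\[
\lvert \Delta \varphi_j \rvert \leq \lvert (V-E)\varphi_j\rvert + \lvert \psi_j \rvert \qquad \text{a.e.\ on } \RR^d,
\]
in place of the hypothesis of Theorem~\ref{t:scale_free-UCP}. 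Note that $\lvert V-E\rvert \leq \lVert V\rVert_\infty + E_0$, which is the quantity appearing (up to a factor $2$) in the exponent defining $\gamma$.

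\textbf{Adapting the proof.} I would now rerun steps~(3)--(8) of the proof of Theorem~\ref{t:scale_free-UCP} with this inhomogeneous inequality. The underlying Carleman estimate that feeds Corollary~3.2 of \cite{RojasMolinaV-13} tolerates an inhomogeneous right-hand side, so step~(7) is replaced, for each dominating site $k\in\ZZ^d$, by
\[
\lVert \chi_{B(z_{k^+},\delta)}\varphi_j\rVert^2 \geq C_{\mathrm{qUC}}\lVert \chi_{\Lambda_1(k)}\varphi_j\rVert^2 - C_{\mathrm{inh}}\lVert \chi_{\Lambda_T(k)}\psi_j\rVert^2,
\]
with $C_{\mathrm{qUC}}$ as before and $C_{\mathrm{inh}}$ of comparable order. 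Summing over dominating sites and exploiting the finite overlap of the enlarged boxes $\Lambda_T(k)$, together with the dominant/weak dichotomy from step~(4), gives (after reinstating $M$)
\[
\lVert\varphi\rVert^2 \leq A \lVert W_{\delta,Z}\varphi\rVert^2 + A\,B\,M^4 \lVert\psi\rVert^2,
\]
where $A=(\delta/M)^{-K_1(1+M^{4/3}(2\lVert V\rVert_\infty+E_0)^{2/3})}$ (with $K_1$ chosen large enough to absorb the $E_0$-dependence into the exponent of Theorem~\ref{t:scale_free-UCP}) and $B$ is a purely dimensional constant.

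\textbf{Conclusion and main obstacle.} Since $\lVert\psi\rVert^2\le\gamma^2\lVert\varphi\rVert^2$, the specific choice of $\gamma$ in the theorem makes $A\,B\,M^4\gamma^2\le 1/2$, so the $\psi$-term can be absorbed, yielding $\lVert W_{\delta,Z}\varphi\rVert^2\ge(2A)^{-1}\lVert\varphi\rVert^2=M^4\gamma^2\lVert\varphi\rVert^2$, which is equivalent to the operator bound. The main technical obstacle lies in Step~3: one must check that the inhomogeneous Carleman estimate produces \emph{precisely} a term of the form $M^4\lVert\psi\rVert^2$ with a constant whose dependence on $\delta/M$, $\lVert V\rVert_\infty$, and $E_0$ matches the exponent appearing in the definition of $\gamma$, so that the absorption identity closes with the stated $K_1$. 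This is exactly the bookkeeping carried out for cubes in \cite{Klein-13} and \cite{NakicTTV-15}; the only genuinely new point compared to those references is the passage to infinite sums over $\ZZ^d$, whose convergence is guaranteed by $\varphi,\psi\in L^2(\RR^d)$ as in step~(4) of the preceding proof.
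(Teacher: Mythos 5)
Your proposal is correct and follows essentially the same route as the paper: the paper isolates your inhomogeneous sampling inequality as a standalone statement (Theorem~\ref{thm:mit-Rest}, proved via the dominant/weak dichotomy and Klein's single-box estimate with remainder $\delta^2\lVert\chi_{\Lambda_T(k)}(-\Delta\psi+V\psi)\rVert^2$), and then deduces Theorem~\ref{t:scale_free-UCP2} by exactly your absorption argument, with $E$ the midpoint of $I$ reduced to $E\in[-\lVert V\rVert_\infty,E_0]$ and the real/imaginary decomposition handled at the end. The only difference is organizational: you inline the inhomogeneous sampling step instead of quoting it as a separate theorem.
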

For the proof of Theorem~\ref{t:scale_free-UCP2} we shall need the following quantitative unique continuation principle.
\begin{thm}\label{thm:mit-Rest}
There exists $K_1 \geq 1$ depending only on the dimension, such that for all $M>0$, all $\delta \in (0,M/2)$, all $(M,\delta)$-equidistributed sequences 
$Z$ and all real-valued $\psi \in W^{2,2} (\RR^d)$ we have
\[
\left(\frac{\delta}{M} \right)^{K_1 (1+ M^{4/3}\lVert V\rVert_\infty^{2/3})}
 \lVert \psi \rVert^2
 \leq
 \lVert W_{\delta, Z} \, \psi \rVert^2 + \delta^2 M^2 \lVert -\Delta \psi + V\psi \rVert^2 .
 \]
\end{thm}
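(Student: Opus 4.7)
My plan is to mimic the proof of Theorem~\ref{t:scale_free-UCP} step by step, substituting the inhomogeneous Carleman-type estimate developed in \cite{Klein-13} (see also \cite{NakicTTV-15}) for the homogeneous \cite[Corollary~3.2]{RojasMolinaV-13} used there. The essential novelty is that $\psi$ no longer satisfies any pointwise differential inequality, so every local application of the quantitative unique continuation principle will pick up a remainder controlled by $\lVert -\Delta\psi+V\psi\rVert$ rather than being absorbed.

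First, I would reduce to $M=1$ via the rescaling $y\mapsto My$, $\tilde V(y):=M^2 V(My)$, $\tilde\psi(y):=\psi(My)$, $\tilde\delta:=\delta/M$. Since $\lVert\tilde V\rVert_\infty=M^2\lVert V\rVert_\infty$, $\lVert\tilde\psi\rVert^2=M^{-d}\lVert\psi\rVert^2$, and $\lVert-\Delta\tilde\psi+\tilde V\tilde\psi\rVert^2=M^{4-d}\lVert-\Delta\psi+V\psi\rVert^2$, the prefactor $\tilde\delta^{\,2}\cdot M^4=\delta^2 M^2$ reproduces exactly the $M$-dependence claimed. Assuming $M=1$, I would classify each $k\in\ZZ^d$ as dominating or weak via (\ref{eq:dominant}) with $T=62\lceil\sqrt d\rceil$; the weak-site argument of step~(4) then yields $\lVert\psi\rVert^2\le 2\lVert\chi_D\psi\rVert^2$, where $D$ is the union of the unit boxes at dominating sites. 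For each dominating $k$, the near-neighbor construction of step~(5) places $B(z_{k^+},\delta)$ inside the annulus $\overline{B(z_{k^+},2R)}\setminus B(z_{k^+},R)$, verifying the geometric hypothesis of the inhomogeneous quantitative unique continuation principle on $\Lambda_T(k)$.

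I would then apply that inhomogeneous principle locally to obtain, for each dominating $k$,
\[
\lVert\chi_{B(z_{k^+},\delta)}\psi\rVert^2
\ge
\delta^{K_1(1+\lVert V\rVert_\infty^{2/3})}\lVert\chi_{\Lambda_1(k)}\psi\rVert^2
-\delta^2\lVert\chi_{\Lambda_T(k)}(-\Delta\psi+V\psi)\rVert^2,
\]
and sum over dominating $k\in\ZZ^d$. The balls $B(z_{k^+},\delta)$ sit in pairwise disjoint unit boxes, so the left-hand sides sum to at most $\lVert W_{\delta,Z}\psi\rVert^2$; the first terms on the right sum, via the weak/dominating dichotomy, to at least $\tfrac12\delta^{K_1(1+\lVert V\rVert_\infty^{2/3})}\lVert\psi\rVert^2$; and the remainder terms, being integrals over cubes of fixed side $T=T(d)$, enjoy overlap bounded by a purely dimensional constant $C_d$, giving in total at most $C_d\,\delta^2\lVert-\Delta\psi+V\psi\rVert^2$. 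Absorbing $C_d$ and $1/2$ into a larger $K_1$ and unscaling finishes the proof.

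The hard part will be the first ingredient: extracting from \cite{Klein-13} a local inhomogeneous quantitative unique continuation estimate valid for arbitrary $\psi\in W^{2,2}$ on $\Lambda_T(k)$, compatible with the annular geometric condition corrected in the Remark above, with a remainder of exactly the form $\delta^2\lVert\chi_{\Lambda_T(k)}(-\Delta\psi+V\psi)\rVert^2$, and—crucially—with the \emph{same} exponent $K_1(1+\lVert V\rVert_\infty^{2/3})$ of $\delta$ as in the homogeneous \cite[Corollary~3.2]{RojasMolinaV-13}. Once this Carleman-type bound is in hand, the combinatorial and geometric steps transcribe from Theorem~\ref{t:scale_free-UCP} without essential change.
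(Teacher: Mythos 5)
Your proposal follows essentially the same route as the paper's proof: rescale to $M=1$, split $\ZZ^d$ into dominating and weak sites, apply the local inhomogeneous quantitative unique continuation estimate of \cite{Klein-13} (which is precisely Theorem~2.1 there, already stated with the remainder $\delta^2\lVert\chi_\Omega(-\Delta\psi+V\psi)\rVert^2$ and the exponent $K_1'(1+\lVert V\rVert_\infty^{2/3})$, so the ``hard part'' you flag is a direct citation rather than a new derivation), and sum using the bounded overlap of the cubes $\Lambda_T(k)$. The only deviations are cosmetic choices of constants and of the geometric hypothesis being verified (the paper takes $T=46\sqrt{d}$, $k^+=k+2\mathbf{e_1}$, and checks Klein's $Q$-based ball-inclusion condition rather than the annulus condition), none of which affects the argument.
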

\begin{proof}
By scaling it sufficies to consider $M = 1$. We follow \cite[Proof of Theorem 2.2]{Klein-13} and fix $\psi \in W^{2,2} (\RR^d)$. A site $k \in \ZZ^d$ is called dominant if Ineq.~\eqref{eq:dominant} holds with $T = 46\sqrt{d}$. We denote by $D$ the union of those boxes $ \Lambda_1(k)$ such that $k \in \ZZ^d$ is dominant. 
Then $\lVert \psi\rVert^2 < 2 \lVert \chi_D \psi\rVert^2$. 
For a dominating site $k \in \ZZ^d$ we define the right near neighbor by $k^+ = k + 2\mathbf{e_1}$
and want to apply \cite[Theorem 2.1]{Klein-13}, see also \cite{BourgainK-13}, with
\[
 \Omega = \Lambda_T (k),\quad
 \Theta = \Lambda_1 (k), \quad
 x_0 = z_{k^+} .
\]
Thus, we have to check the assumptions of \cite[Theorem 2.1]{Klein-13}. 
We obviously have $Q = Q (x_0,\Theta) \geq 1$ and $Q = Q (x_0 , \Theta) \leq 3\sqrt{d}$.
({Note that the inequality $Q = Q (x_0 , \Theta) \leq (5/2)\sqrt{d}$ as written in \cite{Klein-13} is not true, e.g.\ if $d=1$.})
Hence, for all $y \in B (z_{k^+} , 6Q + 2)$
\begin{align*}
 \lvert k - y \lvert &\leq \lvert k-k^+  \rvert + \lvert k^+ - z_{k^+} \rvert + \lvert z_{k^+} - y  \rvert 
  < 2 + \sqrt{d} + 6Q+2 
  \leq 23\sqrt{d} = T / 2 ,
\end{align*}
and hence $B (z_{k^+} , 6Q + 2) \subset B (k , T/2) \subset \Lambda_T (k)$.
Thus, Theorem 2.1 of \cite{Klein-13} gives us for all dominating sites $k \in \ZZ^d$
\[
 \left( \frac{\delta}{Q} \right)^{K_1' (1+\lVert V\rVert_\infty^{2/3})}
 \lVert \chi_{\Lambda_1 (k)} \psi \rVert^2
 \leq
 \lVert \chi_{B(z_{k^+} , \delta)} \psi \rVert^2 + \delta^2 \lVert \chi_{\Lambda_T (k)} ( -\Delta \psi + V\psi ) \rVert^2 ,
\]
where $K_1' \geq 1$ is some constant depending only on the dimension. Now, summing over dominating $k \in \ZZ^d$ we obtain
\[
\left( \frac{\delta}{Q} \right)^{K_1' (1+\lVert V\rVert_\infty^{2/3})}
 \lVert \chi_D \psi \rVert^2
 \leq
 \lVert W_{\delta, Z} \, \psi \rVert^2 + \lceil T \rceil^d \delta^2 \bigl\lVert -\Delta \psi + V\psi \rVert^2 .
\]
Using $\lVert \varphi\rVert^2 < 2 \lVert \chi_D \varphi\rVert^2$ we obtain the statement of the theorem.
\end{proof}
We are now ready to prove Theorem~\ref{t:scale_free-UCP2}.
\begin{proof}[Proof of Theorem~\ref{t:scale_free-UCP2}]
We follow \cite[Proof of Theorem 1.1]{Klein-13}. Without loss of generality we assume that $I = [E-\gamma,E+\gamma]$ with some $E \in [-\lVert V\rVert_\infty  , E_0 ]$. Hence,
 \[
  \lVert V-E \rVert_\infty \leq \lVert V\rVert_\infty + \lvert E \rvert \leq 2\lVert V\rVert_\infty + E_0 .
 \]
If $\psi \in \ran \chi_I (H)$ is real-valued, we apply Theorem~\ref{thm:mit-Rest} with $V$ replaced by $V-E$ to $\psi$, 
and obtain, by using $\lVert (H-E) \psi \rVert^2 \leq \gamma^2 \lVert \psi \rVert^2$, the inequality
\begin{equation*}
 2 M^4 \gamma^2 \lVert \psi \rVert^2
 \leq
 \lVert W_{\delta, Z} \, \psi \rVert^2
 +
 \delta^2 M^2 \lVert -\Delta \psi + V \psi - E \psi \rVert^2 
  \leq 
 \lVert W_{\delta, Z} \, \psi \rVert^2
 +
 \delta^2 M^2 \gamma^2 \lVert \psi \rVert^2 .
\end{equation*}
Using $2M^4\gamma^2 - \delta^2 M^2 \gamma^2 \geq \gamma^2 M^4$, this proves the theorem if $\psi$ is real-valued. If $\psi$ is complex-valued we use $\lVert \psi \rVert^2 = \lVert \Re \psi \rVert^2 + \lVert \Im \psi \rVert^2$ and $\Re \psi, \Im \psi \in \ran (\chi_I (H))$, and conclude the statement of the theorem.
\end{proof}

Given a Schr\"odinger operator $H=-\Delta+V$ with bounded, measurable potential $V$ and some $E\in \RR$ we
call $\psi_n, n \in \NN,$ a \emph{Weyl sequence} for $H$ and $E$ 
if $\psi_n\in W^{2,2} (\RR^d)$, $\lVert \psi_n \rVert =1$,  and $\lVert (H-E)\psi_n \rVert <1/n $ 
for all $n \in \NN$. 
\begin{thm} \label{t:scale_free-Weyl}
For any $M >0$, $\delta \in (0,M/2]$, any $(M,\delta)$-equidistributed sequence 
$Z$,
any measurable and bounded $V\colon {\RR^d}\to \RR$, and any Weyl sequence for $H$ and $E$ we have
\begin{equation*}
\frac{1}{2} \left( \frac{\delta}{M}\right)^{K_1(1+M^{4/3} \lVert V-E \rVert_\infty^{2/3})}
\lVert \psi_n \rVert 
\leq \lVert W_{\delta, Z} \,\psi_n \rVert 
\leq \lVert \psi_n \rVert 
\end{equation*}
for $n \geq \sqrt{2} \delta M(\delta/M)^{-(1/2) K_1 (1+ M^{4/3}\lVert V-E\rVert_\infty^{2/3})}$, where $K_1$ is taken from Theorem~\ref{thm:mit-Rest}.
\end{thm}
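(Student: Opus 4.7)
The plan is to apply Theorem~\ref{thm:mit-Rest} with $V$ replaced by $V-E$ to each member of the Weyl sequence, and to use the defining property $\lVert (H-E)\psi_n\rVert <1/n$ to absorb the remainder term into the left-hand side, provided $n$ is as large as in the hypothesis. The upper bound $\lVert W_{\delta,Z}\psi_n\rVert \leq \lVert \psi_n\rVert$ is immediate, since $W_{\delta,Z}$ is the multiplication operator by a characteristic function and hence an orthogonal projection on $L^2(\RR^d)$.

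Since $\psi_n$ may be complex-valued while Theorem~\ref{thm:mit-Rest} requires a real-valued function, first I would split $\psi_n = \Re \psi_n + i\, \Im\psi_n$. Because $H-E$ is a real differential operator, $(H-E)\Re\psi_n$ and $(H-E)\Im\psi_n$ are the real and imaginary parts of $(H-E)\psi_n$, so
\[
\lVert \Re\psi_n\rVert^2 + \lVert \Im\psi_n\rVert^2 = \lVert \psi_n\rVert^2 = 1, \qquad
\lVert (H-E)\Re\psi_n\rVert^2 + \lVert (H-E)\Im\psi_n\rVert^2 = \lVert (H-E)\psi_n\rVert^2 < 1/n^2 .
\]
The same orthogonal decomposition holds for $W_{\delta,Z}\psi_n$, since $W_{\delta,Z}$ is real. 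Applying Theorem~\ref{thm:mit-Rest} (with potential $V-E$) to $\Re\psi_n$ and $\Im\psi_n$ separately and adding the two inequalities then yields
\[
\left(\frac{\delta}{M}\right)^{K_1\bigl(1+ M^{4/3}\lVert V-E\rVert_\infty^{2/3}\bigr)} \lVert \psi_n\rVert^2
\leq \lVert W_{\delta,Z}\psi_n\rVert^2 + \delta^2 M^2 \lVert (H-E)\psi_n\rVert^2 .
\]

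To conclude, I would insert $\lVert \psi_n\rVert = 1$ and $\lVert (H-E)\psi_n\rVert < 1/n$. The hypothesis on $n$ is precisely equivalent to $\delta^2 M^2/n^2 \leq \tfrac12 (\delta/M)^{K_1(1+M^{4/3}\lVert V-E\rVert_\infty^{2/3})}$, so the remainder can be absorbed into the left-hand side, leaving
\[
\tfrac12 \left(\frac{\delta}{M}\right)^{K_1\bigl(1+ M^{4/3}\lVert V-E\rVert_\infty^{2/3}\bigr)} \lVert \psi_n\rVert^2 \leq \lVert W_{\delta,Z}\psi_n\rVert^2 .
\]
Taking square roots and using $\delta/M \leq 1/2 < 1$ together with $K_1(1+M^{4/3}\lVert V-E\rVert_\infty^{2/3}) \geq \tfrac12 K_1(1+M^{4/3}\lVert V-E\rVert_\infty^{2/3})$ gives the claimed lower bound with the factor $1/2$ in front of $(\delta/M)^{K_1(1+M^{4/3}\lVert V-E\rVert_\infty^{2/3})}$.

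The only mildly delicate step is the reduction to the real-valued setting; once that bookkeeping is done the estimate follows from Theorem~\ref{thm:mit-Rest} by a one-line absorption argument. The rest is elementary tracking of the exponents in $\delta/M$, which is guided directly by the explicit threshold prescribed for $n$.
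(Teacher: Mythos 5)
Your proposal is correct and follows essentially the same route as the paper: apply Theorem~\ref{thm:mit-Rest} with $V$ replaced by $V-E$, bound the remainder by $(\delta M/n)^2$ via the Weyl property, and absorb it using the stated threshold on $n$. Your explicit reduction to real-valued functions via $\Re\psi_n$ and $\Im\psi_n$ is a small extra piece of bookkeeping that the paper's proof leaves implicit, and it is handled correctly.
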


\begin{proof}
Theorem ~\ref{thm:mit-Rest} with $V$ replaced by $V-E$ gives
\begin{align*}
\left( \frac{\delta}{M} \right)^{K_1 (1+M^{4/3} \lVert V-E\rVert_\infty^{2/3})}
 \lVert \psi_n \rVert^2
 &\leq
 \lVert W_{\delta, Z} \, \psi_n \rVert^2 + \delta^2 M^2 \lVert (-\Delta + V-E)\psi_n \rVert^2 \\
&\leq
 \lVert W_{\delta, Z} \, \psi_n \rVert^2 + (\delta M/n)^2  .
\end{align*}
For $n^2 \geq 2 \delta^2 M^2 {\delta}^{-K_1 (1+ M^{4/3}\lVert V-E\rVert_\infty^{2/3})}$
we have $(\delta M/n)^2  \leq (1/2) {(\delta/M)}^{K_1 (1+ M^{4/3}\lVert V-E\rVert_\infty^{2/3})}$.
This finishes the proof.
\end{proof}

In the analysis of random Schr\"odinger operators $H_\omega=H_0+V_\omega$ with $H_0=-\Delta+V_{\rm per}$ 
one often studies finite volume/finite box subsystems. This can be achieved by restricting 
$H_\omega$ itself to $\Lambda_L$ with Dirichlet or periodic boundary conditions. Alternatively, if one is interested only in energies in the resolvent set of $H_0$, the finite scale approximation $H_L:= H_0 +\chi_{\Lambda_L} V_\omega$ is possible as well, to which the Theorem above 
applies (under standard model assumptions). 

Meanwhile, Theorems 1, 4, and 5 have been extended in  \cite{BorisovTV}
to variable coefficient elliptic second oder operators. 
\small

\subsubsection*{Acknowledgment}
I.V. would like to thank J.-M.~Barbaroux,  G.~Raikov, and C.~Rojas-Molina for stimulating discussions.
This work has been partially funded by the DFG through grant \emph{Unique continuation principles and equidistribution properties of eigenfunctions}.

\end{document}